\newlength{\unit}
\newcommand{\DyckGrid}[1]{%
  \draw[step=\unit, thin] (0,0) grid (#1,#1);
  \draw[dotted] (0,0) -- (#1,#1);%
  \foreach \x in {1,...,#1}%
  {
    \node at (\x-0.5, -0.5) {\x};
    \node at (-0.5, \x-0.5) {\x};
  }
}
\tikzset{%
  MotzkinNode/.style={circle,minimum size=3pt,inner sep=0pt,fill=black,draw},
  MotzkinRed/.style={red, line width=1pt, decorate, decoration={snake, segment length=5pt, amplitude=1pt}},
  MotzkinBlue/.style={blue, line width=1pt, dashed},
  MotzkinBlack/.style={black, line width=1pt},
  DyckWidth/.style={rounded corners=1, line width=1.5pt}
}
\newcommand\open{\begin{tikzpicture}[x=\unit, y=\unit]
 \draw[dotted] (0,0)--(1,1);
 \draw[MotzkinBlack] (.5,1)--(.5,.5)--(1,.5);
\end{tikzpicture}}
\newcommand\close{\begin{tikzpicture}[x=\unit, y=\unit]
 \draw[dotted] (0,0)--(1,1);
 \draw[MotzkinBlack] (.5,0)--(.5,.5)--(0,.5);
\end{tikzpicture}}
\newcommand\ubounce{\begin{tikzpicture}[x=\unit, y=\unit]
 \draw[dotted] (0,0)--(1,1);
 \draw[MotzkinBlue] (0,.5)--(.5,.5)--(.5,1);
\end{tikzpicture}}
\newcommand\lbounce{\begin{tikzpicture}[x=\unit, y=\unit]
 \draw[dotted] (0,0)--(1,1);
 \draw[MotzkinRed] (.5,0)--(.5,.5);
 \draw[MotzkinRed] (.5,.5)--(1,.5);
\end{tikzpicture}}
\newcommand\fixed{\begin{tikzpicture}[x=\unit, y=\unit]
 \draw[dotted] (0,0)--(1,1);
 \node at (.5,.5) {$\times$};
\end{tikzpicture}}
\newcommand\up{\begin{tikzpicture}[x=\unit, y=\unit]
    \draw[transparent] (0,0)--(1,1);
    \node[MotzkinNode] (a) at (0,0) {};
    \node[MotzkinNode] (b) at (1,1) {};
    \draw[MotzkinBlack] (a) -- (b);
\end{tikzpicture}}
\newcommand\down{\begin{tikzpicture}[x=\unit, y=\unit]
    \draw[transparent] (0,0)--(1,1);
    \node[MotzkinNode] (a) at (0,1) {};
    \node[MotzkinNode] (b) at (1,0) {};
    \draw[MotzkinBlack] (a) -- (b);
\end{tikzpicture}}
\newcommand\levelzero{\begin{tikzpicture}[x=\unit, y=\unit]
    \draw[transparent] (0,0)--(1,1);
    \node[MotzkinNode] (a) at (0,.5) {};
    \node[MotzkinNode] (b) at (1,.5) {};
    \draw[MotzkinBlack] (a) -- (b);
\end{tikzpicture}}
\newcommand\levelred{\begin{tikzpicture}[x=\unit, y=\unit]
    \draw[transparent] (0,0)--(1,1);
    \node[MotzkinNode] (a) at (0,.5) {};
    \node[MotzkinNode] (b) at (1,.5) {};
    \draw[MotzkinRed] (a) -- (b);
\end{tikzpicture}}
\newcommand\levelblue{\begin{tikzpicture}[x=\unit, y=\unit]
    \draw[transparent] (0,0)--(1,1);
    \node[MotzkinNode] (a) at (0,.5) {};
    \node[MotzkinNode] (b) at (1,.5) {};
    \draw[MotzkinBlue] (a) -- (b);
\end{tikzpicture}}
\def\MotzkinU{-- ++(1,1) node[MotzkinNode]{}}
\def\MotzkinD{-- ++(1,-1) node[MotzkinNode]{}}
\def\MotzkinL{-- ++(1,0) node[MotzkinNode]{}}
\definecolor{darkblue}{rgb}{0,0,0.7} 
\newcommand{\Dfn}[1]{\emph{\color{darkblue} #1}} 
\newtheorem{thm}{Theorem}
\newtheorem{cnj}[thm]{Conjecture}
\newtheorem*{cnj*}{Conjecture}
\newtheorem{lem}[thm]{Lemma}
\theoremstyle{definition}
\title[Double deficiencies of Dyck paths]{Double deficiencies of Dyck paths via\\ the Billey-Jockusch-Stanley bijection}
\date{\today}
\author[M.~Rubey]{Martin Rubey}
\author[C.~Stump]{Christian Stump}
\address[M.~Rubey]{Fakult\"at f\"ur Mathematik und Geoinformation, TU Wien, Austria}
\email{Martin.Rubey@tuwien.ac.at}
\address[C.~Stump]{Fakult\"at f\"ur Mathematik, Otto-von-Guericke Universit\"at Magdeburg, Germany}
\email{Christian.Stump@ovgu.de}
\begin{document}

\begin{abstract}
  We prove a recent conjecture by Ren\'e Marczinzik involving certain
  statistics on Dyck paths that originate in the representation
  theory of Nakayama algebras of a linearly oriented quiver.  We do
  so by analysing the effect of the Billey-Jockusch-Stanley bijection
  between Dyck paths and $321$-avoiding permutations on these
  statistics, which was suggested by the result of a query issued to
  the online database FindStat.
\end{abstract}

\maketitle

\section{Introduction}

This paper serves two purposes.  The first is to demonstrate the power
of the online database FindStat~\cite{FindStat} to help with
explaining and recognising combinatorial parameters (also known as
\lq combinatorial statistics\rq) which occur in perhaps surprising
locations.

The second purpose, achieving the first, is to prove a refinement of
the combinatorial part of the following conjecture of Ren\'e
Marczinzik~\cite{Mar2017}:
\begin{cnj*}
  The number of $2$-Gorenstein algebras which are
  Nakayama algebras with $n$~simple modules and have an oriented line
  as associated quiver equals the number of Motzkin
  paths\footnote{\url{www.oeis.org/A001006}} of length~$n$.

  Moreover, the number of such algebras having the double centraliser
  property with respect to a minimal faithful projective-injective
  module equals the number of Riordan
  paths\footnote{\url{www.oeis.org/A005043}}, that is, Motzkin paths
  without level-steps at height zero, of length $n$.
\end{cnj*}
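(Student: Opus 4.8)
The plan is to translate the whole statement, via the Billey--Jockusch--Stanley (BJS) bijection, into a single explicit bijection with Motzkin paths. The first step is to make Marczinzik's representation-theoretic conditions explicit on the combinatorial side: under the standard encoding of linearly oriented Nakayama algebras with $n$ simple modules by Dyck paths of semilength $n$, being $2$-Gorenstein becomes one condition on the Dyck path and the double centraliser property a second one. It therefore suffices to prove the two purely combinatorial assertions that the Dyck paths of semilength $n$ satisfying the first condition number $M_n$, and that those satisfying both number $R_n$. I would then apply the BJS bijection $\Phi$ from Dyck paths of semilength $n$ to $321$-avoiding permutations of $[n]$ and, guided by the FindStat match that motivates the paper, show that the $2$-Gorenstein condition corresponds to $\pi$ having \emph{no} \Dfn{double deficiency}, where a double deficiency is an index $i$ with $\pi(i)<i<\pi^{-1}(i)$. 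Concretely this is checked by tracking how $\Phi$ records the positions and values of excedances relative to the main diagonal, so that Part~1 reduces to the claim $\#\{\pi\in S_n(321):\operatorname{ddef}(\pi)=0\}=M_n$ (a small-$n$ check already gives $1,2,4,9$).

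The enumerative heart is then a direct bijection from these permutations to Motzkin paths of length $n$. Drawing the Dyck path of $\pi$ in the $n\times n$ grid weakly above the diagonal, I would read off one Motzkin step for each diagonal index: an ``opening'' cell \open becomes an up-step \up, a ``closing'' cell \close becomes a down-step \down, and a cell where the path runs alongside the diagonal, marked \fixed, becomes a level step. The key point is that the condition $\operatorname{ddef}(\pi)=0$ is exactly what prevents the associated height from ever dropping below $0$, so that the resulting word is a genuine Motzkin path; the lower and upper bounce paths \lbounce\ and \ubounce\ make these heights visible and supply the inverse map. Level steps split according to whether the Dyck path touches the main diagonal there, giving height-zero level steps \levelred\ and positive-height level steps \levelblue.

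For the Riordan refinement I would show that this bijection carries the double centraliser condition precisely to the absence of height-zero level steps, so that the doubly-conditioned Dyck paths correspond to Motzkin paths with no level-step at height $0$, i.e. Riordan paths, yielding Part~2. More strongly, one obtains a refined equidistribution by tracking the statistic that counts height-zero level steps (equivalently, a natural statistic on the $2$-Gorenstein paths recording diagonal touches): the number of $2$-Gorenstein Dyck paths on which this statistic equals $j$ is the number of length-$n$ Motzkin paths with exactly $j$ level-steps at height zero. Summing over $j$ recovers $M_n$ (Part~1), while the $j=0$ term gives $R_n$ (Part~2), so both halves of the conjecture fall out of one statistic-preserving map.

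The hard part will be twofold. First, verifying that $\Phi$ really converts the $2$-Gorenstein Dyck-path statistic into $\operatorname{ddef}$ requires a careful, position-by-position analysis of BJS, since double deficiencies mix information about $\pi$ and $\pi^{-1}$. Second, and more seriously, the Dyck-to-Motzkin bijection must be built so that it simultaneously realises the $M_n$ count, sends the double centraliser condition to the height-zero condition, and respects the refining statistic; the delicate interaction between double deficiencies and the diagonal touches that govern the Motzkin/Riordan dichotomy is where the argument is most likely to require care, and is the main obstacle to organising everything into a single clean bijection rather than three separate arguments.
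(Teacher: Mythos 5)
Your first half coincides with the paper's route: apply Billey--Jockusch--Stanley, show that the $2$-Gorenstein condition on the Dyck path (in the paper, $\mathcal N\cap\mathcal D\subseteq\mathcal F$) is equivalent to the associated $321$-avoiding permutation having no double deficiency, and so reduce Part~1 to $\#\{\pi\in S_n(321):\operatorname{ddef}(\pi)=0\}=M_n$. The paper does exactly this via three lemmas identifying $\mathcal D$, $\mathcal F$ and $\mathcal N$ with fixpoint/deficiency data of $\pi$ and $\pi^{-1}$. Note also that the paper does not re-derive the algebra-to-Dyck-path dictionary; it takes Marczinzik's combinatorial reformulation as the starting point, so your ``first step'' is assumed rather than proved in both treatments.

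The genuine gap is in your Dyck-to-Motzkin step. The Foata--Zeilberger/Elizalde map sends \emph{every} $321$-avoiding permutation to a \emph{bicoloured} Motzkin path that automatically stays weakly above the $x$-axis: reading the diagonal cells gives five local types, namely openings (up-steps), closings (down-steps), fixpoints (level-steps at height zero), and two kinds of positive-height level-steps coloured blue and red, the red ones being exactly the double deficiencies. Your claim that $\operatorname{ddef}(\pi)=0$ ``is exactly what prevents the associated height from ever dropping below $0$'' is false --- nonnegativity holds unconditionally --- and it cannot be repaired within your setup: if level-steps were only distinguished by whether they sit at height zero, the map from the $C_n$ many Dyck paths into monocoloured $\{U,D,L\}$-words could not be injective, so it could not ``supply the inverse map'' as you assert. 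The actual mechanism is that forbidding double deficiencies kills one of the two colours at positive height, so the surviving bicoloured paths are in bijection with ordinary Motzkin paths; that is how $M_n$ (rather than $C_n$) emerges. Relatedly, your colour assignment is inverted: height-zero level-steps correspond to fixpoints of $\pi$ (equivalently, to the set $\mathcal F$), not to the red/double-deficiency steps. Once this is corrected, your Riordan refinement does work essentially as in the paper: on top of $\operatorname{ddef}(\pi)=0$, the double centraliser condition $\mathcal N\cap\mathcal D=\emptyset$ additionally forbids fixpoints, hence height-zero level-steps, and your proposed refinement by the number of fixpoints is a valid strengthening in the same spirit.
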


Let us stress that we do not attempt to explain the algebraic
significance of this conjecture.  Indeed, the part of reducing
it to an enumerative statement in combinatorics,
reproduced as
Conjecture~\ref{conj:1} below, is Marczinzik's achievement.

\section{Combinatorial background}

Consider a square array with columns labelled~$1$ through~$n$ from left to right and rows labelled~$1$ through~$n$ from bottom to top.
A \Dfn{Dyck path of semilength~$n$} is a lattice path with north and east steps running along the edges of the array, starting at the lower left corner,  ending at the upper right corner, and never going below the diagonal $y=x$.
We refer to Figure~\ref{fig:Dyck} for an illustration of our conventions.  Since all of the notions defined below depend on a Dyck path, we do not indicate the Dyck path in the notation to avoid clutter.

In the following we use two variants of the area sequence associated with a Dyck paths: the \Dfn{row-area sequence} $(r_0,r_1,\dots,r_n)$ is obtained by setting $r_0 = -1$ and $r_k$, for $1 \leq k \leq n$, to the number of full squares in the row of the~$k$-th north step between the path and the main diagonal.
For example, the row-area sequence of the Dyck path in Figure~\ref{fig:Dyck} is
\[
(-1,0,1,1,2,3,2,3,4,5,6,7,8,4,4,3,2,2).
\]
Similarly, the \Dfn{column-area sequence} $(c_1,\ldots,c_{n+1})$ is obtained by setting $c_k$, for $1 \leq k \leq n$, to the number of full squares in the column of the~$k$-th east step between the path and the main diagonal.  Additionally, we set $c_{n+1} = -1$.
In the example in Figure~\ref{fig:Dyck}, the column-area sequence is
\[
(1,3,2,8,7,6,5,4,4,4,3,3,2,2,2,1,0,-1).
\]
The additional $-1$ at the beginning of the row- and at the end of the column-area sequence can be interpreted as prepending a north step and appending an east step to the Dyck path, without shifting the main diagonal.
It turns out that several properties below
are easier to describe with this convention.

Finally, a \Dfn{valley} of a Dyck path is an east step directly followed by a north step.  In terms of the array, it is the cell enclosed by the two steps.  Explicitly, if the east step of the valley is the $k$-th east step of the path, and the north step is the $\ell$-th north step of the path, the position of the valley is $(k,\ell)$.

\section{Marczinzik's conjecture and its refinement}

The following notions on Dyck paths are due to Marczinzik and originate in the representation theory of Nakayama algebras of a linearly oriented quiver on~$n$ vertices.
We refer to the MathOverflow discussion~\cite{Mar2017} for further background.
For a Dyck path~$D$ with row-area sequence $(r_0,r_1,\dots,r_n)$ and column-area sequence $(c_1,\ldots,c_{n+1})$,
\begin{itemize}
  \item \Dfn{$\mathcal D$} is the set of indices $k$ with $c_{k+1} = c_k - 1$,
  \item \Dfn{$\mathcal F$}, is the set of indices k with
    $r_{k+1+c_{k+1}} = r_{k-1}+c_{k+1}+2$, and
  \item \Dfn{$\mathcal N$} be the set of rows which do not contain a valley.
\end{itemize}

We can now state Marczinzik's conjecture.
Recall that a \Dfn{Motzkin path of length~$n$} is a lattice path from $(0,0)$ to $(n,0)$ consisting of up-steps $(1,1)$, down-steps $(1,-1)$ and level-steps $(1,0)$ that never goes below the $x$-axis.
\begin{cnj}[Marczinzik~\cite{Mar2017}]
  \label{conj:1}
  The number of Dyck paths such that
  $\mathcal N\cap \mathcal D$ is contained in $\mathcal F$ equals the number of
  Motzkin paths of length~$n$.

  Moreover, the number of Dyck paths such that
  $\mathcal N\cap\mathcal D$ is empty equals the number of Riordan
  paths of length~$n$, that is, Motzkin paths without level-steps at
  height zero.
\end{cnj}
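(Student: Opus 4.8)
The plan is to transport the three sets $\mathcal D$, $\mathcal F$ and $\mathcal N$ across the Billey--Jockusch--Stanley (BJS) bijection $\Phi$ between Dyck paths of semilength~$n$ and $321$-avoiding permutations of $[n]$, and to recognise the two conditions of Conjecture~\ref{conj:1} as conditions on the excedance/deficiency structure of $\sigma=\Phi(D)$, as the FindStat query indicated. As a preliminary step I would rewrite the defining relations of the three sets in transparent geometric terms. Writing $r_j=j-1-a_j$ and $c_k=b_k-k$, where $a_j$ is the abscissa of the $j$-th north step (with $a_0:=0$) and $b_k$ the height of the $k$-th east step, one has $k+1+c_{k+1}=b_{k+1}$, so the defining equation of $\mathcal F$ collapses to the single condition $a_{b_{k+1}}=a_{k-1}$, i.e.\ a coincidence of two north-step abscissae. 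Likewise $\mathcal D$ and $\mathcal N$ become simple statements about the local shape of the path near its valleys. These reformulations are what make the translation under $\Phi$ feasible.

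Second, I would compute the image of each set under $\Phi$. The conceptual content, which also explains the title, is that the set $\mathcal N\cap\mathcal D$ governs the \emph{double deficiencies} of $\sigma$, namely the indices $i$ with $\sigma(i)<i$ and $\sigma^{-1}(i)<i$, while $\mathcal F$ singles out the indices that are forced to be fixed points. Rather than assert a literal set-equality, I would establish the two condition-level statements I actually need: that $\mathcal N\cap\mathcal D=\varnothing$ holds if and only if $\sigma$ has no double deficiency, and that $\mathcal N\cap\mathcal D\subseteq\mathcal F$ holds if and only if the only double deficiencies of $\sigma$ occur at indices carrying the corresponding fixed point. Thus the second (stronger) condition of the conjecture becomes ``$\sigma$ has no double deficiency'' and the first becomes the relaxation that permits a double deficiency only in the presence of a fixed point. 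I would pin down conventions and the direction of the correspondence by checking $n\le 3$ by hand.

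Third comes the enumeration. The factorisation of a $321$-avoiding permutation into its two increasing subsequences (excedance values and the rest) yields a recursive structure that I would match against the Motzkin and Riordan recurrences. Concretely, I would set up the bivariate generating function for $321$-avoiding permutations refined by the number of double deficiencies and the number of fixed points, obtainable by the kernel method from the Catalan functional equation, and read off that imposing ``no double deficiency'' gives the Riordan series $\tfrac{1+x-\sqrt{1-2x-3x^2}}{2x(1+x)}$, whereas the relaxed condition gives the Motzkin series $\tfrac{1-x-\sqrt{1-2x-3x^2}}{2x^2}$. The clean statement is that $\mathcal F$ reintroduces exactly the level-steps at height zero that distinguish Motzkin from Riordan paths, so the first part of the conjecture follows from the second by accounting for fixed points. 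An equivalent route is a direct bijection from the qualifying permutations (or, pulling back, the qualifying Dyck paths) to Motzkin and Riordan paths, reading off a step type at each index from its deficiency/excedance/fixed-point status.

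The main obstacle I anticipate is the second step: proving rigorously that $\Phi$ converts the emptiness of $\mathcal N\cap\mathcal D$ and its containment in $\mathcal F$ into the stated conditions on $\sigma$. Because the definitions mix the row- and column-area sequences and involve index shifts such as $k+1+c_{k+1}$, this amounts to a careful, case-based analysis of how a valley of $D$ sits relative to the diagonal and of how $\Phi$ turns the surrounding north and east steps into the values $\sigma(i)$ and $\sigma^{-1}(i)$; the simplification $a_{b_{k+1}}=a_{k-1}$ of $\mathcal F$ should be the key that makes the fixed-point reading of $\mathcal F$ visible. Once this dictionary is in place, both the reduction of the conjecture's two conditions and the final enumeration should be comparatively routine.
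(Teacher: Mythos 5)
Your overall strategy---transporting $\mathcal D$, $\mathcal F$, $\mathcal N$ through the Billey--Jockusch--Stanley bijection and then enumerating via the double-deficiency/fixed-point structure of $\sigma$---is exactly the paper's, and your simplification of the defining equation of $\mathcal F$ to $a_{b_{k+1}}=a_{k-1}$ is correct. But the dictionary you propose to prove in your second step is false, and this is a genuine gap, not a matter of presentation. The correct translation (the paper's Lemmas~\ref{lem:dpoints}, \ref{lem:fpoints} and~\ref{lem:doubledef}) is: $k\in\mathcal D$ iff $\pi(k)\le k$, $k\in\mathcal F$ iff $\pi(k)=k$, and $k\in\mathcal N$ iff $\pi^{-1}(k)\ge k$. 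Hence $\mathcal N\cap\mathcal D$ is the \emph{disjoint union} of the fixed points and the double deficiencies, so $\mathcal N\cap\mathcal D\subseteq\mathcal F$ is equivalent to ``no double deficiency'', while $\mathcal N\cap\mathcal D=\varnothing$ is equivalent to ``no double deficiency \emph{and} no fixed point''. Your claim that $\mathcal N\cap\mathcal D=\varnothing$ iff $\sigma$ has no double deficiency fails already for the path $N^nE^n$: its permutation is the identity, which has no double deficiency, yet $\mathcal N\cap\mathcal D=\{1,\dots,n\}$. (Your formulation of the containment condition, that double deficiencies occur only ``at indices carrying the corresponding fixed point'', is not meaningful either, since no index can be both a double deficiency and a fixed point.)

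Your third step contains the compensating error: $321$-avoiding permutations with no double deficiency are counted by the \emph{Motzkin} numbers, not the Riordan numbers---for $n=3$, four of the five $321$-avoiding permutations (all but $312$) have no double deficiency, matching the Motzkin number $4$, whereas the Riordan number is $1$; the Riordan numbers count those with neither double deficiencies nor fixed points. Because this mistake is exactly dual to the one in your second step, your final pairing (containment $\leftrightarrow$ Motzkin, emptiness $\leftrightarrow$ Riordan) agrees with the conjecture, but neither intermediate statement you propose to prove is true, so the argument cannot be completed as written. Once the dictionary is corrected, your plan does go through essentially as in the paper: the Foata--Zeilberger/Elizalde bijection (your ``equivalent route'' of reading off a step type from the excedance/deficiency/fixed-point status of each index) sends double deficiencies to red level-steps and fixed points to level-steps at height zero, which yields both counts simultaneously; the generating-function detour is then unnecessary, though it could also work provided you track the joint distribution of the two statistics rather than each separately.
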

The case $n=3$ is illustrated in Figure~\ref{fig:minimal}.

\medskip

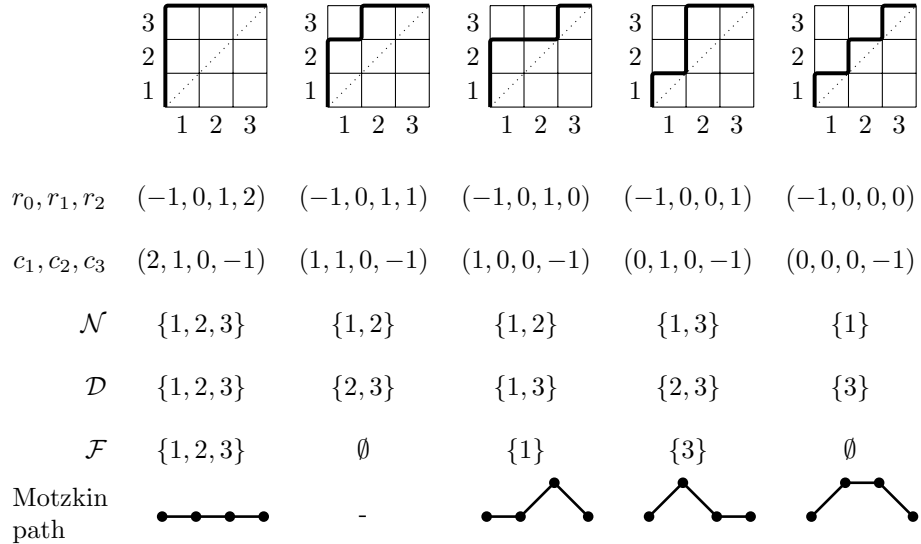
\begin{figure}
  \centering%
  \setlength{\unit}{0.45cm} 
  \renewcommand{\arraystretch}{2}
  \begin{tabular}{rccccc}
    &
      \begin{tikzpicture}[x=\unit, y=\unit]
        \DyckGrid{3}%
        \draw[DyckWidth]%
        (0,0)--(0,3)--(3,3);%
      \end{tikzpicture}
    &
      \begin{tikzpicture}[x=\unit, y=\unit]
        \DyckGrid{3}%
        \draw[DyckWidth]%
        (0,0)--(0,2)--(1,2)--(1,3)--(3,3);%
      \end{tikzpicture}
    &
      \begin{tikzpicture}[x=\unit, y=\unit]
        \DyckGrid{3}%
        \draw[DyckWidth]%
        (0,0)--(0,2)--(2,2)--(2,3)--(3,3);%
      \end{tikzpicture}
    &
      \begin{tikzpicture}[x=\unit, y=\unit]
        \DyckGrid{3}%
        \draw[DyckWidth]%
        (0,0)--(0,1)--(1,1)--(1,3)--(3,3);%
      \end{tikzpicture}
    &
      \begin{tikzpicture}[x=\unit, y=\unit]
        \DyckGrid{3}%
        \draw[DyckWidth]%
        (0,0)--(0,1)--(1,1)--(1,2)--(2,2)--(2,3)--(3,3);%
      \end{tikzpicture}
    \\
    $r_0,r_1,r_2$ & $(-1,0,1,2)$ & $(-1,0,1,1)$ & $(-1,0,1,0)$ & $(-1,0,0,1)$ & $(-1,0,0,0)$\\
    $c_1,c_2,c_3$ & $(2,1,0,-1)$ & $(1,1,0,-1)$ & $(1,0,0,-1)$ & $(0,1,0,-1)$ & $(0,0,0,-1)$\\
    $\mathcal N$  & $\{1,2,3\}$  & $\{1,2\}$    & $\{1,2\}$    & $\{1,3\}$    & $\{1\}$\\
    $\mathcal D$  & $\{1,2,3\}$  & $\{2,3\}$    & $\{1,3\}$    & $\{2,3\}$    & $\{3\}$\\
    $\mathcal F$  & $\{1,2,3\}$  & $\emptyset$  & $\{1\}$      & $\{3\}$      & $\emptyset$\\
    \parbox{1.25cm}{Motzkin\\path}
    &
      \hfill
      \begin{tikzpicture}[x=\unit, y=\unit]
        \draw[MotzkinBlack] (0,0) node[MotzkinNode]{} \MotzkinL\MotzkinL\MotzkinL;
      \end{tikzpicture}
    &
      -
    &
      \hfill
      \begin{tikzpicture}[x=\unit, y=\unit]
        \draw[MotzkinBlack] (0,0) node[MotzkinNode]{} \MotzkinL\MotzkinU\MotzkinD;
      \end{tikzpicture}
    &
      \hfill
      \begin{tikzpicture}[x=\unit, y=\unit]
        \draw[MotzkinBlack] (0,0) node[MotzkinNode]{} \MotzkinU\MotzkinD\MotzkinL;
      \end{tikzpicture}
    &
      \hfill
      \begin{tikzpicture}[x=\unit, y=\unit]
        \draw[MotzkinBlack] (0,0) node[MotzkinNode]{} \MotzkinU\MotzkinL\MotzkinD;
      \end{tikzpicture}
  \end{tabular}
  \caption{The case $n=3$.}
  \label{fig:minimal}
\end{figure}

Upon seeing this conjecture, our immediate reaction was to transform the condition into a \lq combinatorial statistic\rq\ on Dyck paths, and query the online database FindStat.  Indeed, this approach was doubly successful:  first, the result of the database query, reproduced in the conjecture below, is a substantial common refinement of both parts of Conjecture~\ref{conj:1}, and makes a proof strategy suggest itself.  Second, the references linked in the result provide already all the tools we need.

\begin{cnj}
  \label{conj:2}
  For a Dyck path~$D$, an index $k$ is in $\mathcal F$ but not in
  $\mathcal N\cap\mathcal D$ if and only if it is a double
  deficiency\footnote{\url{www.findstat.org/St000732}} in the
  $321$-avoiding permutation associated with~$D$ using the
  Billey-Jockusch-Stanley
  bijection\footnote{\url{www.findstat.org/Mp00129}}.
\end{cnj}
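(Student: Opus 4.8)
The plan is to show that \emph{both} sides of the claimed equivalence describe the empty subset of $\{1,\dots,n\}$, so that the biconditional holds, vacuously but genuinely, for every index $k$. To compare the two conditions I would first rewrite everything in terms of the geometry of the path. Writing $a_i=i-1-r_i$ for the column of the $i$-th north step and $b_j=c_j+j$ for the height of the $j$-th east step, the sequence $(a_i)$ is weakly increasing with $a_i\le i-1$, the sequence $(b_j)$ is weakly increasing with $b_j\ge j$, and the two are conjugate. In these terms $k\in\mathcal D$ means the $k$-th and $(k{+}1)$-th east steps have the same height (equivalently, the $k$-th east step is not the lower step of a valley), $k\in\mathcal N$ means the $(k{-}1)$-th and $k$-th north steps lie in the same column, and, substituting $a_i=i-1-r_i$ and $b_j=c_j+j$ into the defining equation of $\mathcal F$, one checks that $k\in\mathcal F$ is equivalent to the single relation $a_{b_{k+1}}=a_{k-1}$.

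The combinatorial heart of the argument is then the containment $\mathcal F\subseteq\mathcal N\cap\mathcal D$. Since $(a_i)$ is weakly increasing and $b_{k+1}\ge k$, the equality $a_{b_{k+1}}=a_{k-1}$ forces $a_{k-1}=a_k=\dots=a_{b_{k+1}}$; in particular $a_{k-1}=a_k$, which is exactly $k\in\mathcal N$. For membership in $\mathcal D$ I would argue by contradiction: if the $k$-th east step were the lower step of a valley, then a north step would intervene before the $(k{+}1)$-th east step, so $b_{k+1}\ge b_k+1$, and the north step immediately following the $k$-th east step would sit in column $k$, giving $a_{b_{k+1}}\ge a_{b_k+1}=k$; but $a_{k-1}\le k-2<k$ (indeed $a_i\le i-1$), contradicting $k\in\mathcal F$. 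Hence $\mathcal F\subseteq\mathcal N\cap\mathcal D$, and the left-hand set $\mathcal F\setminus(\mathcal N\cap\mathcal D)$ is empty for every Dyck path; this already matches the $n=3$ table, where each $\mathcal F$ is contained in the corresponding $\mathcal N\cap\mathcal D$.

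It remains to see that the right-hand side is likewise always empty, that is, that the $321$-avoiding permutation $w$ attached to $D$ has no double deficiency. Here I would invoke the explicit form of the Billey–Jockusch–Stanley bijection from the references: a valley with lower (east) index $k$ and upper (north) index $\ell$ becomes the excedance $w(k)=\ell$, and the non-excedance positions receive the unused values in increasing order, so that $\mathcal D$ is precisely the set of non-excedance positions and $w$ is a union of two increasing subsequences. Using the characterisation of double deficiencies supplied by the references attached to St000732, a double deficiency would require a configuration that no union of two increasing subsequences can contain — concretely, a decreasing subsequence of length three — and since $w$ is $321$-avoiding it has none.

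The main obstacle I expect is this last step: transporting the definition of a double deficiency faithfully through the bijection. The statistic involves $w$ and $w^{-1}$ simultaneously — equivalently, both area sequences $(r_i)$ and $(c_j)$ at once — while the index $b_{k+1}$ appearing in $\mathcal F$ is \emph{second order}, so one must track carefully how the bijection intertwines rows and columns and confirm that the permutation-side condition collapses to exactly the unsatisfiable path condition ``$k\in\mathcal F$ and $k\notin\mathcal N\cap\mathcal D$''. The boundary conventions $r_0=c_{n+1}=-1$, corresponding to the virtual prepended north and appended east step, must be respected throughout, since the edge indices $k=1$ and $k=n$ are precisely where the inequalities above are tightest.
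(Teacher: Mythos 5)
There is a fatal gap: your claim that the $321$-avoiding permutation $\pi$ attached to $D$ never has a double deficiency is false. A double deficiency at $k$ requires only $\pi(k)<k<\pi^{-1}(k)$, which does not force a decreasing subsequence of length three. The permutation $\pi=[3,1,2]$ is $321$-avoiding and has a double deficiency at $k=2$; it is exactly the image of the second Dyck path in Figure~\ref{fig:minimal} (the one marked with a dash), for which $(\mathcal N\cap\mathcal D)\setminus\mathcal F=\{2\}$. Moreover, if both sides of the equivalence were empty for every Dyck path, Conjecture~\ref{conj:1} would count \emph{all} Dyck paths, i.e.\ Catalan-many rather than Motzkin-many, so your reading cannot be the intended one. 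Your own (correct) computation that $\mathcal F\subseteq\mathcal N\cap\mathcal D$ holds for every path is precisely the signal that the statement as printed has the two sets transposed: the claim actually established in the paper, and the one used to deduce Conjecture~\ref{conj:1}, is that $k\in(\mathcal N\cap\mathcal D)\setminus\mathcal F$ if and only if $k$ is a double deficiency of~$\pi$. A vacuous-truth strategy should have been checked against the $n=3$ table on \emph{both} sides before being committed to; you checked only the path side.

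The paper's route is to translate each of the three sets separately through the Billey-Jockusch-Stanley bijection: $\mathcal D$ is the set of non-excedances, $\pi(k)\le k$ (Lemma~\ref{lem:dpoints}); $\mathcal F$ is the set of fixpoints, $\pi(k)=k$ (Lemma~\ref{lem:fpoints}); and $\mathcal N$ is the set of $k$ with $\pi^{-1}(k)\ge k$ (Lemma~\ref{lem:doubledef}). The conjunction ``$\pi(k)\le k$ and $\pi^{-1}(k)\ge k$ and $\pi(k)\ne k$'' is then literally the definition of a double deficiency. Your first paragraph, suitably reformulated, essentially reproves the easy directions of Lemmas~\ref{lem:dpoints} and~\ref{lem:doubledef} together with the containment $\mathcal F\subseteq\mathcal N\cap\mathcal D$, and that material is salvageable; but the substantive content of the statement --- that $\mathcal F$ captures \emph{exactly} the fixpoints, so that its complement inside $\mathcal N\cap\mathcal D$ captures exactly the double deficiencies --- is absent, and no argument that concludes with an always-empty set can supply it.
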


This conjecture is verified in the following section.  In the final section, we use Sergi Elizalde's description~\cite{Elizalde2017} of the Foata-Zeilberger bijection between $321$-avoiding permutations and bicoloured Motzkin paths to deduce Conjecture~\ref{conj:1}.

\section{Dyck paths and 321-avoiding permutations}
\label{sec:dyck-paths-321}

\begin{figure}
  \centering%
  \setlength{\unit}{0.7cm} 
  \begin{tikzpicture}[x=\unit, y=\unit]
    \draw[step=\unit, thin] (0,0) grid (17,17);
    \draw[dotted] (0,0) -- (17,17);%
    \foreach \x in {1,...,17}%
    {
      \node at (\x-0.5, -0.5) {\x};
      \node at (-0.5, \x-0.5) {\x};
    }
    \foreach \x/\y in {1/3,2/1,3/6,4/2,5/4,6/5,7/7,8/13,9/14,10/8,11/15,12/9,13/16,14/17,15/10,16/11,17/12}%
    {
      \node at (\x-0.5,\y-0.5) {\huge$\times$};%
    }
    \draw[DyckWidth]%
    (0,0)--(0,2)--%
    (1,2)--(1,5)--%
    (3,5)--(3,12)--%
    (8,12)--(8,13)--%
    (9,13)--(9,14)--%
    (11,14)--(11,15)--%
    (13,15)--(13,16)--%
    (14,16)--(14,17)--(17,17);%
    \draw[<->, thick] (3+.1,5.5)--(6-.1,5.5) node[midway,below,rounded corners,fill=gray!20,inner sep=1] {\tiny $r_{k-1}+1$};
    \draw[<->, thick] (3+.1,11.5)--(11-.1,11.5) node[near start,below,rounded corners,fill=gray!20,inner sep=1] {\tiny $r_{k+1+c_{k+1}}$};;
    \draw[<->, thick] (7.5,7+.1)--(7.5,12-.1) node[midway,right,rounded corners,fill=gray!20,inner sep=1] {\tiny $c_{k+1}+1$};
    \draw[<->, thick, gray] (9+.1,13.5)--(14-.1,13.5); 
    \draw[<->, thick, gray] (14+.1,16.5)--(16-.1,16.5); 
    \draw[<->, thick, gray] (15.5,15+.1)--(15.5,17-.1); 
  \end{tikzpicture}
  \caption{A detailed example.}
  \label{fig:Dyck}
\end{figure}
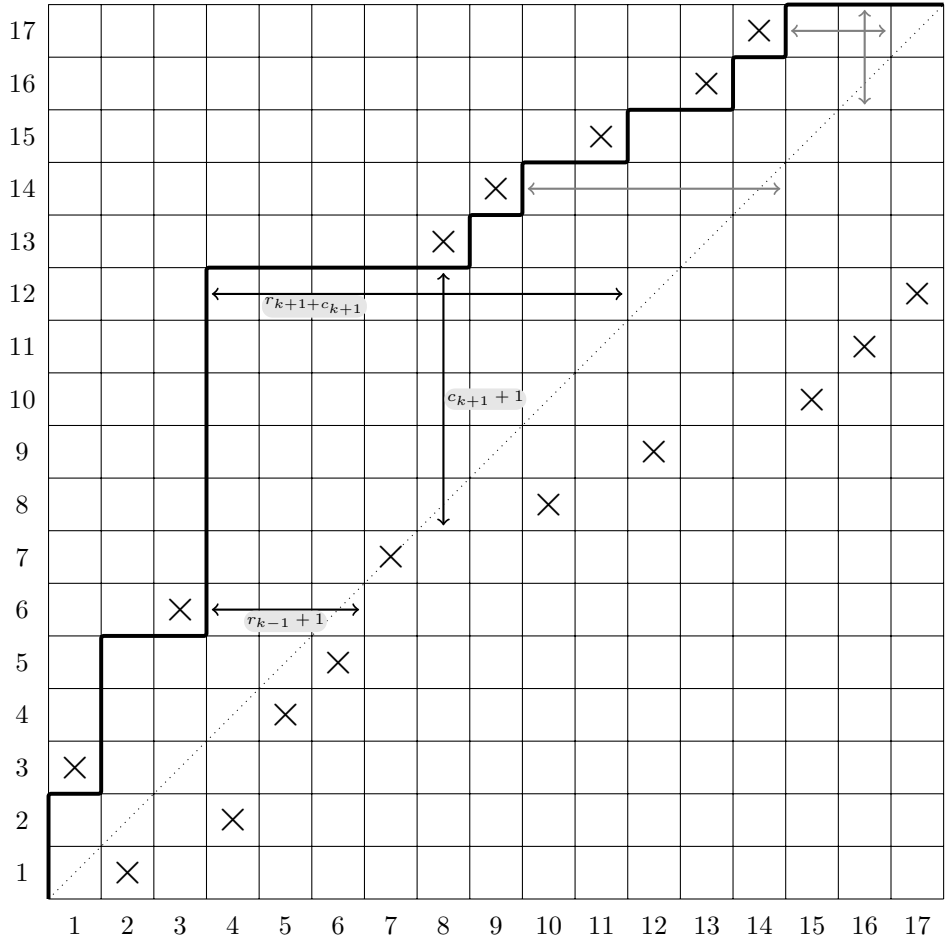

The Billey-Jockusch-Stanley bijection~\cite{BJS1993},
sending a Dyck path~$D$ of semi\-length $n$ to
a $321$-avoiding permutation~$\pi$ of the numbers $\{1,\dots,n\}$, goes as
follows: first, put crosses into the cells corresponding to the
valleys of~$D$.  Then, working from the left to the right, for each
column not yet containing a cross we put a cross into the lowest cell
whose row does not yet contain a cross.
This yields the permutation matrix of the permutation~$\pi$.

An example is given in Figure~\ref{fig:Dyck}, where the displayed
Dyck path of semilength~$17$ is sent to the permutation
\[
  [\ 3,1,6,2,4,5,7,13,14,8,15,9,16,17,10,11,12\ ]
\]
on $\{1,\ldots,17\}$, in one-line notation.

\medskip

For a permutation~$\pi$ of $\{1,\ldots,n\}$ and an index~$1 \leq k \leq n$, we say that~$k$ is
\begin{itemize}
  \item an \Dfn{excedance} if $\pi(k) > k$,
  \item a \Dfn{fixpoint} if $\pi(k) = k$,
  \item a \Dfn{deficiency} if $\pi(k) < k$, and
  \item a \Dfn{double deficiency} if $\pi(k) < k < \pi^{-1}(k)$.
\end{itemize}

Let us first record two general properties of the bijection.
\begin{lem}
\label{lem:BJS-exc-def}
  The crosses in the valleys of~$D$ are excedances of~$\pi$,
  whereas all others are fixpoints or deficiencies of~$\pi$.
\end{lem}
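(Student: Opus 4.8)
\emph{Proof plan.}
The plan is to read off the two coordinate conventions of the bijection and then split the argument according to the two phases of the construction. First I would fix that a cross in the cell in column~$c$ and row~$r$ of the permutation matrix encodes $\pi(c)=r$ (as in Figure~\ref{fig:Dyck}). For the first assertion, consider a valley at position $(k,\ell)$, coming from the $k$-th east step followed by the $\ell$-th north step. Between these two steps the path sits at the lattice point $(k,\ell-1)$, so the defining property that the path never goes below $y=x$ forces $\ell-1\ge k$, that is $\ell>k$. Since the valley cross sits in column~$k$ and row~$\ell$, it encodes $\pi(k)=\ell>k$, an excedance. This already disposes of the valleys.

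For the remaining crosses I would reinterpret the second phase as an order-preserving matching. The second phase only ever fills columns not occupied by a valley and places crosses only in rows not occupied by a valley; as distinct valleys use distinct columns and distinct rows, the set $C$ of non-valley columns and the set $R$ of non-valley rows have the same cardinality. Processing the columns of~$C$ in increasing order and always choosing the lowest free row is then exactly the order-preserving bijection $C\to R$: if $C=\{c_1<c_2<\cdots\}$ and $R=\{r_1<r_2<\cdots\}$, then $\pi(c_i)=r_i$. Hence it suffices to prove $r_i\le c_i$ for every~$i$, which is what makes each such $c_i$ a fixpoint or a deficiency.

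The key estimate I would then establish is a threshold count. For $m\ge 0$ let $f(m)$ and $g(m)$ denote the number of valleys whose column, respectively whose row, is at most~$m$. Because every valley satisfies column $<$ row, a valley with row $\le m$ automatically has column $\le m$, whence $g(m)\le f(m)$. Consequently the number $m-g(m)$ of non-valley rows that are $\le m$ is at least the number $m-f(m)$ of non-valley columns that are $\le m$. Specializing to $m=c_i$, there are exactly $i$ non-valley columns at most~$c_i$ and therefore at least~$i$ non-valley rows at most~$c_i$; in particular the $i$-th smallest non-valley row satisfies $r_i\le c_i$, giving $\pi(c_i)=r_i\le c_i$, as required.

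The conceptual content is light, so I do not expect a genuine obstacle; rather, the two places that demand care are the identification of the greedy second phase with the order-preserving bijection $C\to R$ (which hinges on the cardinalities of~$C$ and~$R$ agreeing and on always taking the lowest free row) and the coordinate bookkeeping translating a valley at $(k,\ell)$ into the point $(k,\ell-1)$ on the path, from which the crucial inequality $\ell>k$—and hence $g(m)\le f(m)$—flows. Everything else is routine counting.
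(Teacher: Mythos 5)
Your argument is correct. The treatment of the valley crosses is the same as the paper's: the path staying weakly above $y=x$ forces $\ell>k$ for a valley at $(k,\ell)$, hence an excedance. For the remaining crosses you take a more global route than the paper does. The paper disposes of a non-valley column~$k$ with a one-line local pigeonhole: only $k-1$ crosses can lie in columns to the left of column~$k$ (valley crosses in columns $\geq k$ sit in rows $>k$), so one of the bottom~$k$ rows is still free and the greedy choice lands at height at most~$k$. You instead first identify the second phase with the order-preserving bijection from non-valley columns $C=\{c_1<c_2<\cdots\}$ onto non-valley rows $R=\{r_1<r_2<\cdots\}$, and then prove $r_i\leq c_i$ by the threshold count $g(m)\leq f(m)$, which again rests on the inequality column $<$ row for valleys. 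Both proofs are driven by the same counting fact; yours costs an extra inductive step (verifying $\pi(c_i)=r_i$) but buys the stronger structural statement that the non-excedances of~$\pi$ form an increasing subsequence, which is not needed here but is a pleasant by-product consistent with $321$-avoidance.
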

\begin{proof}
  The first statement is true because Dyck paths stay above the main
  diagonal.  To see the second statement, note that to the left of
  column~$k$ there are only $k-1$ crosses, so at least one of
  the bottom~$k$ rows cannot contain a cross to the left of column~$k$.
\end{proof}

\begin{lem}
\label{lem:BJS-fix}
  An index~$k$ is a fixpoint of~$\pi$ if and only if~$D$ does not have a valley in any position $(i,j)$ with $i \leq k$ and $j \geq k$.
\end{lem}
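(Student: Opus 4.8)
The plan is to read off from the Billey--Jockusch--Stanley algorithm an explicit description of $\pi$ on its non-excedance positions, and then to convert the equation $\pi(k)=k$ into a condition on valleys. By Lemma~\ref{lem:BJS-exc-def} the excedances of $\pi$ are exactly the valley columns; I write $E$ for this set and $L=\{\pi(i):i\in E\}$ for the set of valley rows, so that the valleys are the positions $(i,\pi(i))$ with $i\in E$, and $|E|=|L|$. The key observation is that the second phase of the algorithm processes the columns outside $E$ in increasing order and gives each of them the lowest still-unoccupied row, which is automatically a row outside $L$. Hence this phase realises the unique increasing bijection from $\{1,\dots,n\}\setminus E$ onto $\{1,\dots,n\}\setminus L$: if the non-valley columns are $c_1<\dots<c_m$ and the non-valley rows are $\rho_1<\dots<\rho_m$, then $\pi(c_t)=\rho_t$.

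Next I would turn this into a counting criterion. Fix $k$ and put $a=\#\{i\in E:i\le k\}$ and $b=\#\{j\in L:j\le k\}$. If $k\notin E$, then $k$ is the $(k-a)$-th non-valley column, so $\pi(k)=\rho_{k-a}$; thus $\pi(k)=k$ holds precisely when $k\notin L$ and $k$ is the $(k-a)$-th non-valley row, which, since the non-valley rows up to $k$ number $k-b$, amounts to $a=b$. Running this equivalence in both directions shows that $k$ is a fixpoint if and only if $k\notin E$, $k\notin L$, and $a=b$.

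It then remains to recognise the valley condition in this criterion. Because the path stays weakly above $y=x$, every valley $(i,j)$ satisfies $i<j$, so $j\le k$ forces $i\le k$; consequently $b\le a$, and the difference $a-b$ counts exactly the valleys with $i\le k<j$. Hence $a=b$ is equivalent to the absence of a valley with $i\le k<j$, while the conditions $k\notin E$ and $k\notin L$ rule out valleys with $i=k$ and with $j=k$ respectively. Together these three conditions are exactly the absence of any valley with $i\le k\le j$, which is the assertion of the lemma. The step I expect to require the most care is this final matching: one has to track the off-by-one in the index $k-a$ and confirm that the valley rows and valley columns equal to $k$ are precisely the cases separating the half-open inequality $i\le k<j$ from the closed inequality $i\le k\le j$, so that the symmetric statement emerges. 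Establishing the increasing-bijection description of the second phase, by contrast, is a routine reading of the algorithm, but it is the fact on which the whole argument hinges.
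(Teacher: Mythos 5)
Your proof is correct, and it takes a genuinely different route from the paper's. The paper argues locally: if $\pi(k)=k$, then the cross in column~$k$ was placed in the lowest row still free, so rows $1,\dots,k-1$ must already be occupied by crosses in columns $1,\dots,k-1$; hence $\pi$ restricts to a permutation of $\{1,\dots,k-1\}$, which forbids any valley $(i,j)$ with $i\le k$ and $j\ge k$, and the paper then simply asserts that each implication reverses. You instead extract a global description of the second phase of the algorithm --- the non-valley columns are matched to the non-valley rows by the unique increasing bijection, $\pi(c_t)=\rho_t$ --- and reduce the fixpoint condition to the counting identity $a=b$ between valley columns and valley rows weakly below~$k$, together with $k\notin E\cup L$; the final translation into the closed condition $i\le k\le j$ is handled correctly, using that every valley satisfies $i<j$ so that $a-b$ counts exactly the valleys with $i\le k<j$. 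Both arguments rest on the same structural fact about the Billey--Jockusch--Stanley construction, but yours buys an explicit formula for $\pi$ on all non-excedances (not just a criterion for fixpoints) and makes fully rigorous the step that the paper compresses into ``both implications are equivalences''; the paper's version buys brevity and a more conceptual statement (fixpoints are exactly the places where $\pi$ splits as a direct sum), which is closer in spirit to how the lemma is used later.
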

\begin{proof}
  Let~$k$ be a fixpoint of~$\pi$.
  The construction of~$\pi$ implies that column~$k$ does not contain a valley of~$D$.  Moreover, for every $\ell < k$ there is an $\ell' < k$ with $\pi(\ell') = \ell$.
  In other words $\pi$ restricts to a permutation of $\{1,\ldots,k-1\}$, implying that~$D$ does not have a valley in position $(i,j)$ for $i \leq k$ and $j \geq k$.
  As both implications in the argument are equivalences, the statement follows.
\end{proof}

Conjecture~\ref{conj:2} is now an immediate consequence of the following
three statements.
\begin{lem}
  \label{lem:dpoints}
  An index~$k$ is in $\mathcal D$ if and only if it is a fixpoint or a deficiency of~$\pi$.
\end{lem}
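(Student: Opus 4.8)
The plan is to translate the defining condition $c_{k+1}=c_k-1$ of $\mathcal D$ into a statement about the valleys of~$D$, and then to read off the conclusion from Lemma~\ref{lem:BJS-exc-def}. First I would express the column-area sequence through the heights of the east steps. Writing $h_k$ for the height of the $k$-th east step, so that this step runs from $(k-1,h_k)$ to $(k,h_k)$, the cells in column~$k$ that lie strictly above the diagonal and weakly below the path are exactly those in rows $k+1,\dots,h_k$; hence $c_k=h_k-k$ for $1\le k\le n$. Since~$D$ stays weakly above the diagonal and its east steps are read from left to right, the heights are non-decreasing and satisfy $k\le h_k\le n$.

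Next I would determine what $c_{k+1}=c_k-1$ means geometrically. For $1\le k\le n-1$ the identity $c_{k+1}-c_k=h_{k+1}-h_k-1$ together with $h_{k+1}\ge h_k$ shows that $c_{k+1}=c_k-1$ holds if and only if $h_{k+1}=h_k$, that is, the $k$-th and $(k+1)$-th east steps occur at the same height and no north step separates them. For $k=n$ the convention $c_{n+1}=-1$ turns the condition into $h_n=n$, meaning the $n$-th east step reaches the top-right corner and is followed by no north step. In either case $k\in\mathcal D$ is equivalent to the $k$-th east step not being immediately followed by a north step; equivalently, since the valley formed by the $k$-th east step would sit in column~$k$ at row $h_k+1$, column~$k$ contains no valley.

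Finally I would invoke Lemma~\ref{lem:BJS-exc-def}. As the Billey--Jockusch--Stanley construction places exactly one cross in each column, and as the crosses lying in valleys are precisely the excedances while all remaining crosses are fixpoints or deficiencies, column~$k$ contains a valley if and only if~$k$ is an excedance. Combining this with the preceding paragraph yields that $k\in\mathcal D$ if and only if~$k$ is not an excedance, i.e.\ a fixpoint or a deficiency. I expect the only real care to be needed in the boundary case $k=n$, where the appended column-area value $c_{n+1}=-1$ must be used, and in correctly locating the valley attached to the $k$-th east step; everything else reduces to the identity $c_k=h_k-k$ and the monotonicity of the path, so I do not anticipate a substantial obstacle.
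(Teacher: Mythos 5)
Your proposal is correct and follows the same route as the paper: it reduces membership in $\mathcal D$ to the absence of a valley in column~$k$ and then applies Lemma~\ref{lem:BJS-exc-def}. The paper treats the first reduction as immediate from the definitions, whereas you verify it carefully via the identity $c_k=h_k-k$ (including the boundary case $k=n$); this is the same argument, just written out in full.
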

\begin{proof}
  By definition,~$k$ is in $\mathcal D$ if and only if there is no
  valley in column $k$.  Thus the claim is the statement of
  Lemma~\ref{lem:BJS-exc-def}.
\end{proof}

\begin{lem}
  \label{lem:fpoints}
  An index~$k$ is in $\mathcal F$ if and only if it is a fixpoint
  of~$\pi$.
\end{lem}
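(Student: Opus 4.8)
The plan is to rephrase both sides in terms of the columns occupied by the north steps of~$D$, and to bridge them with Lemma~\ref{lem:BJS-fix}. First I would introduce, for $0\le j\le n$, the quantity $a_j:=j-1-r_j$, which is the $x$-coordinate of the $j$-th north step, i.e.\ the number of east steps preceding it; here $a_0=0$ and $a_0\le a_1\le\cdots\le a_n$ because east steps only increase the $x$-coordinate. Dually, the $(k+1)$-th east step lies in row $k+1+c_{k+1}$, so the subscript appearing in the definition of~$\mathcal F$ is exactly that row; write $b_{k+1}:=k+1+c_{k+1}$. Substituting $r_j=j-1-a_j$ into the defining equation $r_{k+1+c_{k+1}}=r_{k-1}+c_{k+1}+2$, everything cancels except $a_{b_{k+1}}=a_{k-1}$. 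By monotonicity of $(a_j)$ this says precisely that north steps $k-1,k,\dots,b_{k+1}$ all lie in a single column.

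By Lemma~\ref{lem:BJS-fix} an index~$k$ is a fixpoint if and only if $D$ has no valley in a position $(i,j)$ with $i\le k$ and $j\ge k$, so it remains to show that $a_{k-1}=a_{b_{k+1}}$ holds if and only if no such valley exists. The elementary link is that the $j$-th north step is immediately preceded by an east step---that is, it forms a valley at position $(a_j,j)$---exactly when $a_{j-1}<a_j$, whereas equal consecutive values mean it is preceded by a north step and so contributes no valley. The crucial positional fact is that the $(k+1)$-th east step occurs after the $b_{k+1}$-th north step but before the $(b_{k+1}+1)$-th, whence $a_j\le k$ for all $j\le b_{k+1}$ and $a_j\ge k+1$ for all $j>b_{k+1}$.

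Granting this, both directions are short. If $a_{k-1}<a_{b_{k+1}}$, I would take the least $j_0\in\{k,\dots,b_{k+1}\}$ with $a_{j_0-1}<a_{j_0}$; then $(a_{j_0},j_0)$ is a valley with $j_0\ge k$ and $a_{j_0}\le k$, lying in the forbidden region. Conversely, if north steps $k-1,\dots,b_{k+1}$ share a column, then no valley has its north step among $k,\dots,b_{k+1}$, a valley with north step $>b_{k+1}$ has first coordinate $a_j\ge k+1>k$, and a valley with north step $<k$ has second coordinate $<k$; hence none meets $i\le k,\ j\ge k$. Combined with Lemma~\ref{lem:BJS-fix}, this yields the equivalence.

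I expect the main obstacle to be not the computation but the identification that the opaque index $k+1+c_{k+1}$ is the row of the $(k+1)$-th east step: this is what converts the algebraic condition into a comparison with~$k$ and lets one decide whether a given valley falls inside or outside the region. The only remaining care is with the boundary indices, namely $k=n$ (where $c_{n+1}=-1$ and one uses the appended east step) and $k=1$ (where one uses the prepended north step with $a_0=0$), which I would dispatch directly from the conventions.
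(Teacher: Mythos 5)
Your proof is correct and follows essentially the same route as the paper's: both identify $k+1+c_{k+1}$ as the height of the $(k+1)$-st east step and use Lemma~\ref{lem:BJS-fix} to translate the fixpoint condition into the absence of a valley in the region $i\le k$, $j\ge k$. The only difference is one of execution --- you verify the equivalence algebraically via the substitution $a_j=j-1-r_j$ and monotonicity, where the paper counts full squares in row $k+1+c_{k+1}$ geometrically; your version is in fact the more rigorous of the two.
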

\begin{proof}
  Let us first remark that an index $k$ in $\mathcal F$ is an index
  for which~$D$ does not have a valley between the $(k-1)$-st north
  step and the $(k+1)$-st east step.

  This is best understood by looking at an example: in
  Figure~\ref{fig:Dyck} the index $k = 7$ is in $\mathcal F$ because
  \[
    r_6 + c_8 + 2 = 2 + 4 + 2 = 8 = r_{12} = r_{8 + c_8}.
  \]
  On the other hand, $k=15$ is not in $\mathcal F$, since
  \[
    r_{14} + c_{16} + 2 = 5 + 2 + 2 = 9 \neq 2 = r_{17} = r_{16 + c_{16}}.
  \]

  Suppose now that~$k$ is a fixpoint of~$\pi$.
  Then $k+1+c_{k+1}$ is the index of the row just below the $(k+1)$-st east step.
  The number of full squares in this row between the Dyck path and
  strictly to the left of column~$k$ is, by Lemma~\ref{lem:BJS-fix}, precisely $r_{k} = r_{k-1} + 1$.
  Moreover, the number of remaining full squares in this row, towards the main diagonal, is precisely $c_{k} = c_{k+1} + 1$.
  Observe that in this case, we are just rewriting the equality in
  the definition of $\mathcal F$ as $r_{k+c_k} = r_k + c_k$.

  On the other hand, if the cross in column~$k$ is not a fixpoint,
  there must be a valley to the left and above $(k,k)$, which entails
  that the number of full squares in row $k+1+c_{k+1}$ will be strictly
  smaller than $r_{k+1}+c_{k-1}+2$.
  Observe that in this case, it is not in general possible to rewrite the equality in the definition of $\mathcal F$ as above.  For example, this is the case with the index $k=6$ in Figure~\ref{fig:Dyck}.
\end{proof}

\begin{lem}
  \label{lem:doubledef}
  An index~$k$ is in~$\mathcal N$ if and only if $\pi^{-1}(k)$ is a fixpoint or a deficiency of~$\pi$.
\end{lem}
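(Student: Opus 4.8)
The plan is to translate the condition $k\in\mathcal N$ directly into a statement about the single cross lying in row~$k$ of the permutation matrix, and then to invoke Lemma~\ref{lem:BJS-exc-def}. Since the matrix of~$\pi$ contains exactly one cross in each row, the cross in row~$k$ sits in column~$\pi^{-1}(k)$; thus the behaviour of~$\pi^{-1}(k)$ as an index of~$\pi$ is governed entirely by the geometric nature of that cross, and the whole statement reduces to identifying when this cross comes from a valley.

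First I would observe that row~$k$ contains a valley of~$D$ if and only if the unique cross in row~$k$ is a valley cross. This is immediate from the Billey-Jockusch-Stanley construction: all valley cells receive crosses before any further crosses are placed, so if row~$k$ contains a valley at a position~$(i,k)$, then the cross in that row is precisely the one in the valley. Conversely, a valley cross in row~$k$ manifestly exhibits a valley in that row. (Note that once the valley has supplied a cross to row~$k$, no further cross can be placed there, since every row carries exactly one cross.)

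Next I would apply Lemma~\ref{lem:BJS-exc-def}, which identifies the valley crosses with the excedances of~$\pi$. The cross in row~$k$, lying in column~$\pi^{-1}(k)$, is therefore a valley cross if and only if $\pi^{-1}(k)$ is an excedance, that is, $\pi\bigl(\pi^{-1}(k)\bigr)=k>\pi^{-1}(k)$. Combining this with the previous step, row~$k$ contains a valley if and only if $\pi^{-1}(k)$ is an excedance of~$\pi$.

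Finally I would negate. By definition $k\in\mathcal N$ means that row~$k$ does \emph{not} contain a valley, which by the chain above is equivalent to $\pi^{-1}(k)$ failing to be an excedance, i.e.\ to $\pi^{-1}(k)$ being a fixpoint or a deficiency of~$\pi$, as desired. I do not anticipate a genuine obstacle in this argument; the only point demanding care is the bookkeeping of the two index conventions---column versus row, and~$k$ versus~$\pi^{-1}(k)$---and the identification of the cross in row~$k$ with column~$\pi^{-1}(k)$ is exactly what renders that bookkeeping transparent.
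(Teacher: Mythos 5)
Your argument is correct and follows essentially the same route as the paper's proof: both reduce the condition $k\in\mathcal N$ to the position of the unique cross in row~$k$ and then identify "row~$k$ has a valley" with "$\pi^{-1}(k)$ is an excedance" via Lemma~\ref{lem:BJS-exc-def}. The paper phrases this more tersely as the cross in row~$k$ not lying to the left of the main diagonal, i.e.\ $\pi^{-1}(k)\geq k$; your version merely makes the appeal to Lemma~\ref{lem:BJS-exc-def} explicit.
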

\begin{proof}
  By definition, $k \in \mathcal N$ if and only if row~$k$ does not contain a valley of~$D$.  This is the same as saying that the cross in row~$k$ is not to the left of the main diagonal, in symbols, $\pi^{-1}(k) \geq k$.
\end{proof}

\begin{proof}[Proof of Conjecture~\ref{conj:2}]
  This is a direct consequence of Lemmas~\ref{lem:dpoints},~\ref{lem:fpoints}, and~\ref{lem:doubledef}.
\end{proof}

\section{321-avoiding permutations and bicoloured Motzkin paths}
\label{sec:Motzkin}

\begin{figure}
  \centering%
  \setlength{\unit}{0.7cm} 
  \begin{tikzpicture}[x=\unit, y=\unit]
    \draw[step=\unit, thin] (0,0) grid (17,17);
    \draw[dotted] (0,0) -- (17,17);%
    \foreach \x in {1,...,17}%
    {
      \node at (\x-0.5, -0.5) {\x};
      \node at (-0.5, \x-0.5) {\x};
    }
    \foreach \x/\y in {1/3,2/1,3/6,4/2,5/4,6/5,7/7,8/13,9/14,10/8,11/15,12/9,13/16,14/17,15/10,16/11,17/12}%
    {
      \node at (\x-0.5,\y-0.5) {\huge$\times$};%
    }
    \draw[DyckWidth]%
    (0,0)--(0,2)--%
    (1,2)--(1,5)--%
    (3,5)--(3,12)--%
    (8,12)--(8,13)--%
    (9,13)--(9,14)--%
    (11,14)--(11,15)--%
    (13,15)--(13,16)--%
    (14,16)--(14,17)--(17,17);%
    \foreach \x/\y in {1/3,6/5,8/13,9/14,11/15,15/10,16/11,17/12}%
    {
      \draw[MotzkinBlack] (\x-0.5,\y-0.5) -- (\x-0.5,\x-0.5);
    }
    \foreach \x/\y in {2/1,3/6,10/8,11/15,12/9,13/16,14/17,16/11}%
    {
      \draw[MotzkinBlack] (\x-0.5,\y-0.5) -- (\y-0.5,\y-0.5);
    }
    \foreach \x/\y in {2/1,3/6,4/2,5/4,10/8,12/9,13/16,14/17}%
    {
      \ifthenelse {\x<\y}
      {
        \draw[MotzkinBlue] (\x-0.5,\y-0.5) -- (\x-0.5,\x-0.5);
      }
      {
        \draw[MotzkinRed] (\x-0.5,\y-0.5) -- (\x-0.5,\x-0.5);
      }
    }
    \foreach \x/\y in {1/3,4/2,5/4,6/5,8/13,9/14,15/10,17/12}%
    {
      \ifthenelse {\x<\y}
      {
        \draw[MotzkinBlue] (\x-0.5,\y-0.5) -- (\y-0.5,\y-0.5);
      }
      {
        \draw[MotzkinRed] (\x-0.5,\y-0.5) -- (\y-0.5,\y-0.5);
      }
    }
    %
    \foreach \x/\h in {1/0,8/0,9/1,11/2}%
    {
      \node[MotzkinNode] (a) at (\x-1.0,\h-4.5) {};
      \node[MotzkinNode] (b) at (\x,\h-3.5) {};
      \draw[MotzkinBlack] (a) -- (b);
    }
    \foreach \x/\h in {6/1,15/3,16/2,17/1}%
    {
      \node[MotzkinNode] (a) at (\x-1.0,\h-4.5) {};
      \node[MotzkinNode] (b) at (\x,\h-5.5) {};
      \draw[MotzkinBlack] (a) -- (b);
    }
    \foreach \x/\h in {2/1,4/1,5/1,10/2,12/3}%
    {
      \node[MotzkinNode] (a) at (\x-1.0,\h-4.5) {};
      \node[MotzkinNode] (b) at (\x,\h-4.5) {};
      \draw[MotzkinRed] (a) -- (b);
    }
    \foreach \x/\h in {3/1,13/3,14/3}%
    {
      \node[MotzkinNode] (a) at (\x-1.0,\h-4.5) {};
      \node[MotzkinNode] (b) at (\x,\h-4.5) {};
      \draw[MotzkinBlue] (a) -- (b);
    }
    \node[MotzkinNode] (a) at (7-1.0,-4.5) {};
    \node[MotzkinNode] (b) at (7,-4.5) {};
    \draw[MotzkinBlack] (a) -- (b);
  \end{tikzpicture}
  \caption{The associated bicoloured Motzkin path.}
  \label{fig:Motzkin}
\end{figure}
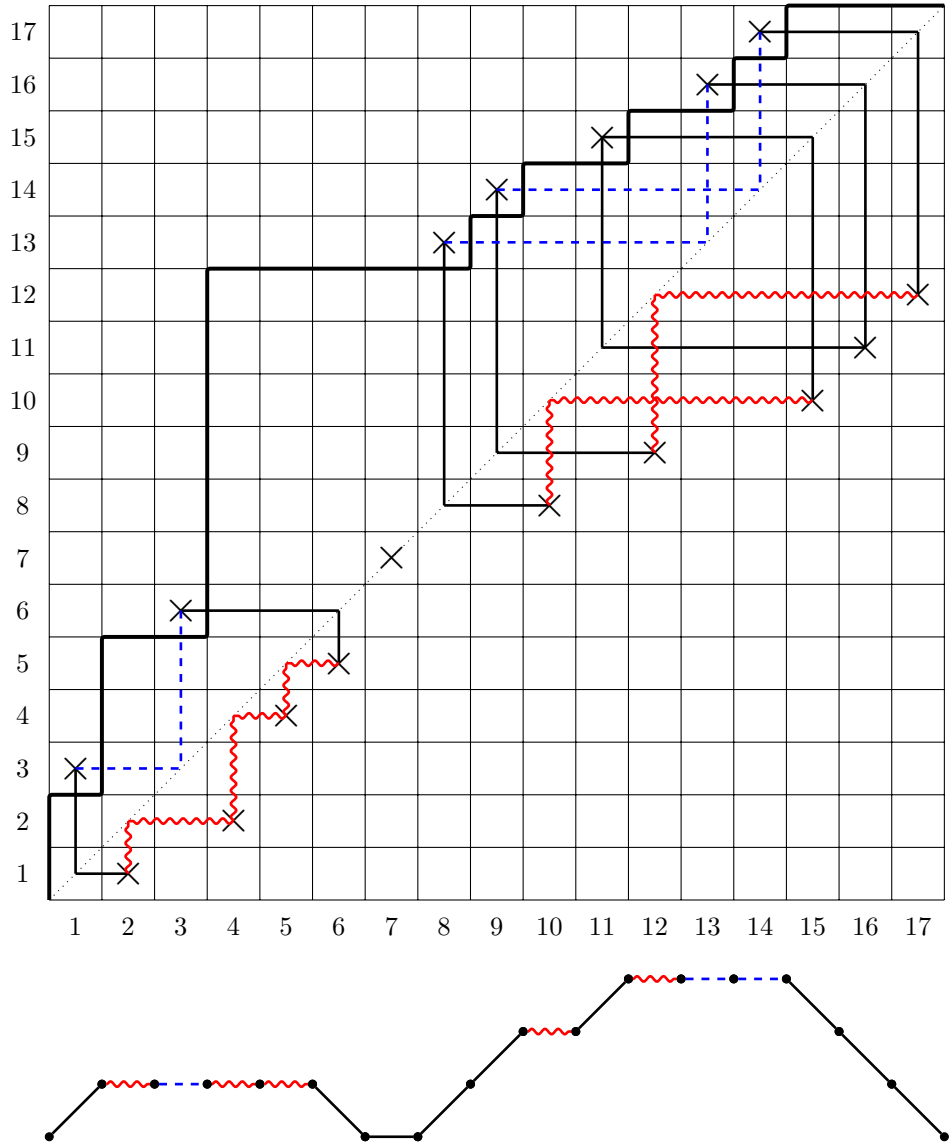

Recall that a \Dfn{bicolored Motzkin path} is a Motzkin path where level-steps not at height zero come in two colors, say blue and red.
To prove Conjecture~\ref{conj:1}, we follow Elizalde's description~\cite{Elizalde2017} of
the Foata-Zeilberger bijection restricted to $321$-avoiding permutations and bicolored
Motzkin paths: from each cross in the array draw a horizontal and a
vertical line to the diagonal.  Then, looking at these lines as
emanating from the diagonal, there are five possibilities, which are
translated into up, down, level-steps at height zero, blue level-steps and red level-steps of the Motzkin path as follows:
\[
  \setlength{\unit}{1cm} 
  \begin{array}{ccccc}
    \open & \close & \fixed & \ubounce & \lbounce\\[6pt]
    \up & \down & \levelzero & \levelblue & \levelred
  \end{array}
\]
We can now deduce Conjecture~\ref{conj:1} from Conjecture~\ref{conj:2}:

\begin{proof}[Proof of Conjecture~\ref{conj:1}]
  Let $D$ be a Dyck path, $\pi$ the corresponding permutation and~$M$ the corresponding bicoloured Motzkin path.

  The construction of the Motzkin path is such that double
  deficiencies of~$\pi$ correspond to red level-steps of~$M$.
  Bicoloured Motzkin paths without any red level-steps are simply
  Motzkin path.  Together with Conjecture~\ref{conj:2} this implies
  the first part of Conjecture~\ref{conj:1}.

  By Lemma~\ref{lem:fpoints}, $\mathcal F$ is precisely the set of fixpoints of $\pi$, by Lemma~\ref{lem:dpoints} $\mathcal F$ is contained in $\mathcal D$, and by Lemma~\ref{lem:doubledef} $\mathcal F$ is contained in $\mathcal N$.
  Therefore, the statement that $\mathcal N$ does not contain $\mathcal D$ is equivalent to the statement that~$\pi$ does neither have double deficiencies nor fixpoints.
  The second part of the conjecture thus follows from the first, together with the observation that level-steps at height zero correspond to fixpoints of~$\pi$.
\end{proof}

\printbibliography
\end{document}